\renewcommand{\vec}[1]{\boldsymbol{#1}}
\newcommand{\+}[1]{\vec{#1}}
\newcommand{\lt}{\left}
\newcommand{\rt}{\right}
\newcommand\norm[1]{\left\lVert#1\right\rVert}
\newcommand{\rvec}[1]{\lt(#1\rt)}
\newcommand{\pmat}[1]{\begin{pmatrix}#1\end{pmatrix}}
\newcommand{\bmat}[1]{\begin{bmatrix}#1\end{bmatrix}}
\DeclareMathOperator{\diag}{diag}
\newtheorem{algorithm}{Algorithm}
\newtheorem{theorem}{Theorem}
\newtheorem{lemma}{Lemma}
\newtheorem{remark}{Remark}
\newcommand{\x}{\vec{x}}
\newcommand{\y}{\vec{y}}
\newcommand{\z}{\vec{z}}
\newcommand{\e}{\vec{e}}
\newcommand{\w}{\vec{w}}
\newcommand{\0}{\vec{0}}
\newcommand{\dx}{\vec{\Delta x}}
\newcommand{\dy}{\vec{\Delta y}}
\newcommand{\dz}{\vec{\Delta z}}
\newcommand{\xk}{\vec{x}^k}
\newcommand{\yk}{\vec{y}^k}
\newcommand{\zk}{\vec{z}^k}
\newcommand{\du}{\vec{\Delta u}}
\newcommand{\dv}{\vec{\Delta v}}
\newcommand{\wmin}{w_{\min}}
\newcommand{\eps}{\varepsilon}
\begin{document}
\title{An Inexact Potential Reduction Method for Linear Programming}
\date{\vspace{-5ex}}
\author{Lukas Schork\thanks{L.Schork@ed.ac.uk} \and Jacek
  Gondzio\thanks{J.Gondzio@ed.ac.uk}}
\maketitle

\begin{center}
\textit{School of Mathematics, University of Edinburgh, Edinburgh EH9 3FD,
  Scotland, UK} \\[5ex]
\textbf{Technical Report, July 2016}
\end{center}
\vspace{3ex}

\begin{abstract}
A class of interior point methods using inexact directions is analysed. The
linear system arising in interior point methods for linear programming is
reformulated such that the solution is less sensitive to perturbations in the
right-hand side. For the new system an implementable condition is formulated
that controls the relative error in the solution. Based on this condition, a
feasible and an infeasible potential reduction method are described which retain
the convergence and complexity bounds known for exact directions.
\end{abstract}

\section{Introduction}
\label{sec:introduction}

The primal-dual interior point method (IPM) is one of the most widely used
methods for solving large linear programming problems. The method can be
analysed and implemented as a path-following algorithm, in which the iterates
follow a central trajectory toward the solution set, or as a potential reduction
algorithm, which makes progress by systematically reducing a potential function.
Most implementations make use of the path-following concept \cite{andersen1996}.

This paper analyses a variant of the IPM that works with inexactly computed step
directions. Inexact directions occur in implementations which solve the linear
equation systems by iterative methods. The analysis given here is closely
related to such an implementation and provides criteria to control the level of
inexactness in the computation.

The paper introduces two interior point algorithms that work with inexact
directions. The first algorithm requires a strictly feasible starting point and
keeps all iterates feasible. The second algorithm can start from an infeasible
point and achieves feasibility in the limit. Both algorithms are formulated and
analysed as potential reduction methods. It is proved that in both cases the
\emph{inexact} methods retain the convergence and complexity bounds of the
\emph{exact} ones.

The linear program is stated in standard form of a primal-dual pair
\begin{alignat}{2}
  \label{eq:primal}
  &\text{minimize }\+c^T\x &\quad& \text{subject to }A\x=\+b,\;\x\ge\0, \\
  \label{eq:dual}
  &\text{maximize }\+b^T\y && \text{subject to }A^T\y+\z=\+c,\;\z\ge\0,
\end{alignat}
in which $A$ is an $m\times n$ matrix of full row rank. An IPM generates a
sequence of iterates $\rvec{\xk,\yk,\zk}$ by taking steps along the Newton
direction to the nonlinear system
\begin{equation}
  \label{eq:nonlinear}
  F(\x,\y,\z) := \pmat{A\x-\+b \\ A^T\y+\z-\+c \\ X\z-\mu\e} = \0,
\end{equation}
in which $X:=\diag(\x)$, $\e$ is the $n$-vector of ones and $\mu>0$ is a
parameter that is gradually reduced to zero. The step directions are computed
from the linear system
\begin{equation}
  \label{eq:newton}
  \bmat{A&0&0 \\ 0&A^T&I \\ Z^k&0&X^k}
  \pmat{\dx^* \\ \dy^* \\ \dz^*} =
  \pmat{\+b-A\xk \\ \+c-A^T\yk-\zk \\ -X^k\zk+\mu\e},
\end{equation}
in which $X^k:=\diag(\xk)$ and $Z^k:=\diag(\zk)$. The step sizes are chosen to
keep $\xk$ and $\zk$ positive.

The potential reduction method is a particular instance of the IPM. It sets
$\mu=(\xk)^T\zk/(n+\nu)$ for a constant $\nu\ge\sqrt{n}$ and chooses a step size
to decrease a potential function by at least a certain constant. This paper uses
the Tanabe-Todd-Ye potential function \cite{tanabe1988,todd1990}
\begin{equation*}
  \phi(\x,\z) := (n+\nu)\ln(\x^T\z) - \sum_{i=1}^n\ln(x_iz_i) -n\ln n.
\end{equation*}

The inexact methods work with step directions of the form
\begin{equation}
  \label{eq:inewton}
  \bmat{A&0&0 \\ 0&A^T&I \\ Z^k&0&X^k}
  \pmat{\dx \\ \dy \\ \dz} =
  \pmat{\+b-A\xk \\ \+c-A^T\yk-\zk \\ -X^k\zk+\mu\e+\+\xi_0},
\end{equation}
in which a residual $\+\xi_0$ remains in the complementarity equations. The
primal and dual feasibility equations must be satisfied exactly. Conditions will
be imposed on $\+\xi_0$ to guarantee that the step decreases $\phi$
sufficiently.

\section{The Inexact Potential Reduction Method}
\label{sec:algorithm}

Considering one iterate $\rvec{\xk,\yk,\zk}$, we define diagonal matrices
\begin{equation*}
  D:=(X^k)^{1/2}(Z^k)^{-1/2}, \quad W:=(X^kZ^k)^{1/2},
\end{equation*}
and $\w:=W\e$. To analyse the step directions it is convenient to write the
Newton system \eqref{eq:newton} in the scaled quantities $\du^*:=D^{-1}\dx^*$
and $\dv^*:=D\dz^*$, which is
\begin{equation}
  \label{eq:snewton}
  \bmat{AD&0&0 \\ 0&DA^T&I \\ I&0&I}
  \pmat{\du^* \\ \dy^* \\ \dv^*} =
  \pmat{\+b-A\xk \\ D(\+c-A^T\yk-\zk) \\ -\w+\mu W^{-1}\e} =:
  \pmat{\+p \\ \+q \\ \+r}.
\end{equation}
The inexact solution corresponding to a residual $\+\xi$ in the scaled system
then satisfies
\begin{equation}
  \label{eq:isnewton}
  \bmat{AD&0&0 \\ 0&DA^T&I \\ I&0&I}
  \pmat{\du \\ \dy \\ \dv} =
  \pmat{\+p \\ \+q \\ \+r+\+\xi}.
\end{equation}

The inexact potential reduction algorithms make use of the following conditions
on the residual, in which $\kappa\in[0,1)$ and $\norm{\cdot}$ is the Euclidean
norm:
\begin{subequations}
  \begin{align}
    \label{eq:cond1}
    -\+r^T\+\xi &\le \kappa \norm{\+r}^2, \\
    \label{eq:cond2}
    \norm{\+\xi} &\le \kappa \min(\norm{\du},\norm{\dv}), \\
    \label{eq:cond3}
    -\w^T\+\xi &\le \kappa n/(n+\nu) \norm{\w}^2.
  \end{align}
\end{subequations}

Algorithm~\ref{alg:feasible} is the inexact version of the feasible potential
reduction method described in \cite{kojima1991,wright1997}. All iterates belong
to the strictly feasible set
\begin{equation*}
  \mathcal{F}^o := \left\{(\x,\y,\z): A\x=\+b, A^T\y+\z=\+c, (\x,\z)>0\right\},
\end{equation*}
which is assumed to be nonempty. The algorithm does not require condition
\eqref{eq:cond3}.

\begin{algorithm}
  \label{alg:feasible}
  Given $\rvec{\x^0,\y^0,\z^0}\in\mathcal{F}^o$ and $\eps>0$. Choose
  $\nu\ge\sqrt{n}$ and $\kappa\in[0,1)$. Set $\delta:=0.15(1-\kappa)^4$ and
  $k:=0$.
  \begin{enumerate}
    \item If $(\xk)^T\zk\le\eps$ then stop.
    \item Let $\mu:=(\xk)^T\zk/(n+\nu)$. Compute the solution to
      \eqref{eq:isnewton} with residual $\+\xi$ that satisfies
      \eqref{eq:cond1}--\eqref{eq:cond2}. Set $\dx:=D\du$ and $\dz:=D^{-1}\dv$.
    \item Find step size $\alpha^k$ such that
      \begin{equation}
        \label{eq:stepsize1}
        \phi(\xk+\alpha^k\dx, \zk+\alpha^k\dz) \le \phi(\xk,\zk)-\delta.
      \end{equation}
    \item Set $\rvec{\x^{k+1},\y^{k+1},\z^{k+1}} := \rvec{\xk,\yk,\zk} +
      \alpha^k\rvec{\dx,\dy,\dz}$, $k:=k+1$ and go to 1.
  \end{enumerate}
\end{algorithm}

The following theorem, which is proved in Section~\ref{sec:proof1}, states
that Algorithm~\ref{alg:feasible} retains the complexity bound of the exact
version analysed in \cite{kojima1991,wright1997}.

\begin{theorem}
  \label{thm:feasible}
  Let $\rvec{\x^0,\y^0,\z^0}\in\mathcal{F}^o$ and $L\ge0$ such that
  $\phi(\x^0,\z^0)=O(\nu L)$. Suppose that $\ln(1/\eps)=O(L)$. Then
  Algorithm~\ref{alg:feasible} terminates in $O(\nu L)$ iterations provided that
  $\kappa$ is chosen independently of $n$.
\end{theorem}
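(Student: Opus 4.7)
The plan is to reduce the theorem to a single per-iteration decrease lemma combined with a standard lower bound on the potential function. Concretely, I would prove the theorem in two parts: (i) a technical lemma asserting that every iteration produces a decrease $\phi^{k+1}\le\phi^k-\delta$, and (ii) a short counting argument that converts this into an $O(\nu L)$ iteration bound.

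The core obstacle is the per-iteration decrease lemma: one has to show that for any inexact scaled direction $(\du,\dy,\dv)$ satisfying \eqref{eq:isnewton} together with the residual conditions \eqref{eq:cond1}--\eqref{eq:cond2}, there exists a step size $\alpha^k\in(0,1)$ such that \eqref{eq:stepsize1} holds with $\delta=0.15(1-\kappa)^4$. The argument would mimic the classical exact potential-reduction analysis of Kojima--Mizuno--Ye, expanding $\phi$ along $(\dx,\dz)$ in terms of the scaled direction, bounding the linear term by the choice of $\mu=(\xk)^T\zk/(n+\nu)$ and the quadratic term by $\norm{\du+\dv}^2$. The residual $\+\xi$ contaminates both terms, but condition \eqref{eq:cond1} ensures that the directional derivative of $\phi$ along the inexact direction still dominates $\norm{\+r}$, while condition \eqref{eq:cond2} prevents $\+\xi$ from inflating $\norm{\du}$ or $\norm{\dv}$ by more than a factor $(1-\kappa)^{-1}$. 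Tracking the factors $(1-\kappa)$ through the standard line-search bound produces the constant $0.15(1-\kappa)^4$, and crucially this constant is independent of $n$ provided $\kappa$ is.

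Once the lemma is in hand the counting argument is routine. Iterating the decrease gives $\phi(\xk,\zk)\le\phi(\x^0,\z^0)-k\delta$. On the other hand, applying the arithmetic--geometric mean inequality to $\sum_{i=1}^n \ln(x_iz_i)\le n\ln(\x^T\z/n)=n\ln(\x^T\z)-n\ln n$ yields the lower bound
\begin{equation*}
  \phi(\x,\z)\ge \nu\ln(\x^T\z).
\end{equation*}
If Algorithm~\ref{alg:feasible} has not terminated at iteration $k$, then $(\xk)^T\zk>\eps$, hence
\begin{equation*}
  \nu\ln\eps<\nu\ln((\xk)^T\zk)\le\phi(\xk,\zk)\le\phi(\x^0,\z^0)-k\delta,
\end{equation*}
so $k\delta<\phi(\x^0,\z^0)+\nu\ln(1/\eps)$. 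Using the hypotheses $\phi(\x^0,\z^0)=O(\nu L)$ and $\ln(1/\eps)=O(L)$, the right-hand side is $O(\nu L)$.

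Finally, since $\kappa$ is chosen independently of $n$, $\delta=0.15(1-\kappa)^4$ is a positive constant independent of $n$, so $k=O(\nu L/\delta)=O(\nu L)$, proving the complexity bound. The only non-trivial step is the per-iteration decrease lemma; the rest is bookkeeping.
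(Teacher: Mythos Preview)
Your proposal is correct and follows essentially the same approach as the paper: the paper isolates the per-iteration decrease as Lemma~\ref{lem:deltaphi1} (using \eqref{eq:cond1} to bound the linear term $g_1$ and \eqref{eq:cond2} via \eqref{eq:ubound}--\eqref{eq:vbound} to bound the quadratic term $g_2$, together with the lower bound $\norm{\+r}\ge\sqrt{3}\mu/(2\wmin)$ from Lemma~\ref{lem:wbound}), and then concludes with exactly the counting argument you give based on $\phi(\x,\z)\ge\nu\ln(\x^T\z)$.
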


Algorithm~\ref{alg:infeasible} is an infeasible inexact potential reduction
method, as its sequence of iterates does not, in general, belong to
$\mathcal{F}^o$. It extends Algorithm~1 from \cite{mizuno1995} to work with
inexact directions. Given positive constants $\rho$ and $\eps$, it finds
$\eps$-accurate approximations to solutions $\x^*$ to \eqref{eq:primal} and
$\rvec{\y^*,\z^*}$ to \eqref{eq:dual}, if they exist, such that
\begin{equation*}
  \norm{\rvec{\x^*,\z^*}}_\infty \le \rho.
\end{equation*}

\begin{algorithm}
  \label{alg:infeasible}
  Given $\rho>0$ and $\eps>0$. Set
  $\rvec{\x^0,\y^0,\z^0}=\rho\rvec{\e,\0,\e}$.
  Choose $\sqrt{n}\le\nu\le2n$ and $\kappa\in[0,1)$. Set
  $\delta:=(1-\kappa)^4/(1600(n+\nu)^2)$ and $k:=0$.
  \begin{enumerate}
    \item If $(\xk)^T\zk\le\eps$ then stop.
    \item Let $\mu:=(\xk)^T\zk/(n+\nu)$. Compute the solution to
      \eqref{eq:isnewton} with residual $\+\xi$ that satisfies
      \eqref{eq:cond1}--\eqref{eq:cond3}. Set $\dx:=D\du$ and $\dz:=D^{-1}\dv$.
    \item Find step size $\alpha^k$ such that
      \begin{subequations}
      \begin{align}
        \label{eq:step2a}
        \phi(\xk+\alpha^k\dx, \zk+\alpha^k\dz) &\le \phi(\xk,\zk)-\delta, \\
        \label{eq:step2b}
        (\xk+\alpha^k\dx)^T(\zk+\alpha^k\dz) &\ge (1-\alpha^k) (\xk)^T\zk.
      \end{align}
      \end{subequations}
      If no such step size exists then stop.
    \item Set $\rvec{\x^{k+1},\y^{k+1},\z^{k+1}} := \rvec{\xk,\yk,\zk} +
      \alpha^k\rvec{\dx,\dy,\dz}$, $k:=k+1$ and go to 1.
  \end{enumerate}
\end{algorithm}

The following theorem, which is proved in Section~\ref{sec:proof2}, states that
Algorithm~\ref{alg:infeasible} retains the complexity bound of the exact
infeasible potential reduction method \cite{mizuno1995}.

\begin{theorem}
  \label{thm:infeasible}
  Let $L\ge\ln n$ such that $\rho=O(L)$. Suppose that $\ln(1/\eps)=O(L)$. Then
  Algorithm~\ref{alg:infeasible} terminates in $O(\nu(n+\nu)^2L)$ iterations
  provided that $\kappa$ is chosen independently of $n$. If the algorithm stops
  in step 1 then the iterate is an $\eps$-approximate solution; otherwise it
  stops in step 3 showing that there are no optimal solutions $\x^*$ to
  \eqref{eq:primal} and $\rvec{\y^*,\z^*}$ to \eqref{eq:dual} such that
  $\norm{\rvec{\x^*,\z^*}}_\infty\le\rho$.
\end{theorem}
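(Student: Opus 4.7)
My plan is to adapt the infeasible potential-reduction analysis of Mizuno, Todd and Ye \cite{mizuno1995} to the presence of the residual $\+\xi$ controlled by \eqref{eq:cond1}--\eqref{eq:cond3}, in the same spirit that Theorem~\ref{thm:feasible} extends the feasible analysis. The complexity argument rests on three ingredients. First, a direct calculation with $\rvec{\x^0,\y^0,\z^0}=\rho\rvec{\e,\0,\e}$ gives $\phi(\x^0,\z^0)=\nu\ln(n\rho^2)=O(\nu L)$ under the hypotheses $\rho=O(L)$ and $L\ge\ln n$. Second, the arithmetic-geometric-mean inequality yields $\phi(\x,\z)\ge\nu\ln(\x^T\z)$, so once $\phi$ has decreased by $\Omega(\nu L)$, the product $(\xk)^T\zk$ drops below $\eps$. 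Third, each successful iteration decreases $\phi$ by the fixed constant $\delta=(1-\kappa)^4/(1600(n+\nu)^2)$. Combined, these yield an iteration count of $O(\nu L/\delta)=O(\nu(n+\nu)^2 L)$, provided $\kappa$ is independent of $n$.

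The heart of the proof is showing that step 3 either produces a valid $\alpha^k$ or certifies that no bounded optimum exists. For condition~\eqref{eq:step2b}, I would expand
\[
(\xk+\alpha\dx)^T(\zk+\alpha\dz) = \lt(1-\frac{\alpha\nu}{n+\nu}\rt)(\xk)^T\zk + \alpha\,\w^T\+\xi + \alpha^2\dx^T\dz,
\]
and observe that \eqref{eq:cond3} is tailored so that the linear perturbation from $\+\xi$ contributes at most $\alpha\kappa n/(n+\nu)\cdot(\xk)^T\zk$, preserving the bound $(1-\alpha)(\xk)^T\zk$ for $\alpha$ of order $1/(n+\nu)$. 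Condition~\eqref{eq:step2a} is then handled by the same Taylor-expansion argument used for Theorem~\ref{thm:feasible}: the first-order decrease in $\phi$ along $\rvec{\dx,\dz}$ is controlled below by $(1-\kappa)\norm{\+r}^2$ thanks to \eqref{eq:cond1}, while second-order terms are bounded above using \eqref{eq:cond2} by multiples of $\min(\norm{\du},\norm{\dv})^2$. Optimising the resulting quadratic on the interval $\alpha\in[0,O(1/(n+\nu))]$ imposed by \eqref{eq:step2b} produces the stated $\delta$, with its $(n+\nu)^{-2}$ prefactor reflecting the smaller admissible step in the infeasible regime.

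The main obstacle I anticipate is the alternative statement: if step 3 fails, there is no optimal pair with $\norm{\rvec{\x^*,\z^*}}_\infty\le\rho$. Following \cite{mizuno1995}, the argument exploits that the primal and dual feasibility residuals in \eqref{eq:inewton} are reduced by the exact factor $(1-\alpha^j)$ per iteration, since they are unaffected by $\+\xi$; hence the iterate can be decomposed as a convex combination of $\rvec{\x^0,\y^0,\z^0}$ and a point reachable from the hypothetical bounded optimum, which yields a uniform positive lower bound on admissible step sizes. Translating this argument to the inexact setting, the fresh work lies in controlling $\norm{\dx}$ and $\norm{\dz}$ in terms of $\norm{\+r}$ through the scaled system \eqref{eq:isnewton}, which is exactly what \eqref{eq:cond2} is engineered to permit. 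When the step-size lower bound derived from the bounded-optimum hypothesis conflicts with the failure of step 3, that hypothesis must be false, delivering the certificate claimed in the theorem.
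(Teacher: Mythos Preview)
Your overall architecture matches the paper's: adapt the Mizuno--Kojima--Todd analysis, show $\phi(\x^0,\z^0)=O(\nu L)$, use $\phi\ge\nu\ln(\x^T\z)$, and divide by $\delta$ to get the iteration bound; the $\eps$-approximation claim follows from the geometric decay of the feasibility residual via \eqref{eq:step2b}. But one step in your sketch is wrong in a way that would block the proof.

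You write that the second-order terms are ``bounded above using \eqref{eq:cond2} by multiples of $\min(\norm{\du},\norm{\dv})^2$,'' and later that \eqref{eq:cond2} lets you ``control $\norm{\dx}$ and $\norm{\dz}$ in terms of $\norm{\+r}$.'' Neither is correct. Condition~\eqref{eq:cond2} bounds the residual $\norm{\+\xi}$ by $\kappa\min(\norm{\du},\norm{\dv})$; by Lemma~\ref{lem:relerr} (equivalently \eqref{eq:ubound}--\eqref{eq:vbound}) this only gives $\norm{\du}\le\norm{\du^*}/(1-\kappa)$ and $\norm{\dv}\le\norm{\dv^*}/(1-\kappa)$. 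It says nothing about the size of $\du^*,\dv^*$ themselves. In the feasible case this is harmless because $\du^*\perp\dv^*$ forces $\norm{\du^*}^2+\norm{\dv^*}^2=\norm{\+r}^2$; in the infeasible case that orthogonality fails and $\norm{\du^*},\norm{\dv^*}$ are \emph{not} controlled by $\norm{\+r}$. The paper obtains the needed bound $\norm{\du^*},\norm{\dv^*}\le 5(\xk)^T\zk/\wmin$ from Lemma~\ref{lem:uvbound2} (Lemma~4 of \cite{mizuno1995}), and this is precisely where the bounded-optimum hypothesis $\norm{(\x^*,\z^*)}_\infty\le\rho$ enters---already in the verification of \eqref{eq:step2a}, not only in the certificate argument you describe. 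With this bound in hand, \eqref{eq:cond2} transfers it to $\du,\dv$, and then both $g_2$ and the cross term $\du^T\dv$ needed for \eqref{eq:step2b} are controlled. Note also that the admissible step is not simply $O(1/(n+\nu))$ but rather of order $\wmin^2/((n+\nu)\,\w^T\w)$; the factor $\wmin^2/\w^T\w$ is what ultimately matches the $g_1$ and $g_2$ estimates to produce the stated $\delta$.
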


The following lemma is key to the analysis of the inexact potential reduction
methods given in the next two sections. It exploits the particular form of the
scaled Newton system to prove that condition \eqref{eq:cond2} bounds the
\emph{relative error} in the inexact solution.
\begin{lemma}
  \label{lem:relerr}
  Given solutions to \eqref{eq:snewton} and \eqref{eq:isnewton}, suppose that
  \eqref{eq:cond2} holds for $\kappa\in[0,1)$. Then
  \begin{equation*}
    \frac{\norm{\du-\du^*}}{\norm{\du^*}} \le \frac{\kappa}{1-\kappa}, \quad
    \frac{\norm{\dv-\dv^*}}{\norm{\dv^*}} \le \frac{\kappa}{1-\kappa}.
  \end{equation*}
\end{lemma}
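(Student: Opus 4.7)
\textbf{Proof plan for Lemma \ref{lem:relerr}.}
The plan is to exploit the block structure of the scaled system \eqref{eq:snewton} to obtain an orthogonal decomposition of the error vector $\+\xi$. First I would subtract \eqref{eq:snewton} from \eqref{eq:isnewton} and introduce the error quantities $\delta\+u:=\du-\du^*$, $\delta\+y:=\dy-\dy^*$, $\delta\+v:=\dv-\dv^*$. These satisfy the homogeneous system
\begin{equation*}
  AD\,\delta\+u=\0,\qquad DA^T\delta\+y+\delta\+v=\0,\qquad \delta\+u+\delta\+v=\+\xi.
\end{equation*}

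The first equation says $\delta\+u\in\ker(AD)$, while the second gives $\delta\+v=-DA^T\delta\+y\in\operatorname{range}(DA^T)$. Since $\operatorname{range}(DA^T)=\ker(AD)^\perp$ by standard linear algebra, the vectors $\delta\+u$ and $\delta\+v$ are orthogonal, and the third equation expresses $\+\xi$ as the sum of these two orthogonal components. Pythagoras then yields the pointwise bound
\begin{equation*}
  \norm{\delta\+u}^2+\norm{\delta\+v}^2=\norm{\+\xi}^2,
\end{equation*}
so in particular $\norm{\delta\+u}\le\norm{\+\xi}$ and $\norm{\delta\+v}\le\norm{\+\xi}$.

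It then remains to pass from $\norm{\du}$ (and $\norm{\dv}$) appearing on the right-hand side of \eqref{eq:cond2} to the exact quantities $\norm{\du^*}$ and $\norm{\dv^*}$. Using \eqref{eq:cond2} followed by the triangle inequality $\norm{\du}\le\norm{\du^*}+\norm{\delta\+u}$, I would deduce
\begin{equation*}
  \norm{\delta\+u}\le\norm{\+\xi}\le\kappa\norm{\du}\le\kappa\bigl(\norm{\du^*}+\norm{\delta\+u}\bigr),
\end{equation*}
and solving for $\norm{\delta\+u}$ gives the desired bound $\norm{\delta\+u}/\norm{\du^*}\le\kappa/(1-\kappa)$. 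The argument for $\delta\+v$ is identical, using the other term in the minimum of \eqref{eq:cond2}.

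The only conceptual step is the orthogonal splitting of $\+\xi$; everything else is triangle-inequality bookkeeping. Since the block structure of \eqref{eq:snewton} makes that splitting immediate, I do not expect any serious obstacle. One small thing to be careful about is that the bound requires the exact quantities $\norm{\du^*},\norm{\dv^*}$ to be nonzero, which one can either handle as a trivial special case or ignore since the bound holds vacuously.
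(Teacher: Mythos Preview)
Your proposal is correct and follows essentially the same approach as the paper: the paper phrases the orthogonal splitting via the explicit projection $P:=DA^T(AD^2A^T)^{-1}AD$, writing $\du-\du^*=(I-P)\+\xi$ and $\dv-\dv^*=P\+\xi$, which is exactly your kernel/range decomposition in operator form. The only cosmetic difference is in the final bookkeeping, where the paper first derives $\norm{\du^*}\ge(1-\kappa)\norm{\du}$ and then divides, whereas you substitute $\norm{\du}\le\norm{\du^*}+\norm{\delta\+u}$ and solve; the two are algebraically equivalent.
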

\begin{proof}
  Denoting $P:=DA^T(AD^2A^T)^{-1}AD$, the solution to \eqref{eq:snewton} is
  \begin{subequations}
    \begin{align*}
      \du^* &= DA^T(AD^2A^T)^{-1}\+p - (I-P)\+q + (I-P)\+r, \\
      \dy^* &= (AD^2A^T)^{-1}(\+p + AD\+q - AD\+r), \\
      \dv^* &= -DA^T(AD^2A^T)^{-1}\+p + (I-P)\+q + P\+r.
    \end{align*}
  \end{subequations}
  It follows that
  \begin{align*}
    \du-\du^* &= (I-P)\+\xi, \\
    \dv-\dv^* &= P\+\xi.
  \end{align*}
  Because $P$ and $(I-P)$ are projection operators, $\norm{P}\le1$ and
  $\norm{(I-P)}\le1$. Therefore the absolute errors are bounded by the norm of
  the residual,
  \begin{align*}
    \norm{\du-\du^*} &\le \norm{\+\xi}, \\
    \norm{\dv-\dv^*} &\le \norm{\+\xi}.
  \end{align*}
  On the other hand, it follows from the triangle inequality and
  \eqref{eq:cond2} that
  \begin{subequations}
  \begin{align}
    \label{eq:ubound}
    \norm{\du^*} &= \norm{\du-(I-P)\+\xi} \ge \norm{\du}-\norm{\+\xi}
    \ge (1-\kappa)\norm{\du}, \\
    \label{eq:vbound}
    \norm{\dv^*} &= \norm{\dv-P\+\xi} \ge \norm{\dv}-\norm{\+\xi}
    \ge (1-\kappa)\norm{\dv}.
  \end{align}
  \end{subequations}
  Combining both inequalities and \eqref{eq:cond2} gives
  \begin{align*}
    \frac{\norm{\du-\du^*}}{\norm{\du^*}} &\le \frac{\norm{\+\xi}}{\norm{\du^*}}
    \le \frac{\kappa\norm{\du}}{(1-\kappa)\norm{\du}} = \frac{\kappa}{1-\kappa},
    \\
    \frac{\norm{\dv-\dv^*}}{\norm{\dv^*}} &\le \frac{\norm{\+\xi}}{\norm{\dv^*}}
    \le \frac{\kappa\norm{\dv}}{(1-\kappa)\norm{\dv}} = \frac{\kappa}{1-\kappa}
  \end{align*}
  as claimed.
\end{proof}

\section{Proof of Theorem~\ref{thm:feasible}}
\label{sec:proof1}

This and the next section use two technical results from Mizuno, Kojima and Todd
\cite{mizuno1995}, which are stated in the following two lemmas.

\begin{lemma}
  \label{lem:qbound}
  For any $n$-vectors $\x>0$, $\z>0$, $\dx$, $\dz$ and $\alpha>0$ such that
  $\norm{\alpha X^{-1}\dx}_\infty \le \tau$ and $\norm{\alpha Z^{-1}\dz}_\infty
  \le \tau$ for a constant $\tau\in(0,1)$ it holds true that
  \begin{equation*}
    \phi(\x+\alpha\dx,\z+\alpha\dz) \le \phi(\x,\z) + g_1\alpha + g_2\alpha^2
  \end{equation*}
  with coefficients
  \begin{align*}
    g_1 &= \lt(\frac{n+\nu}{\x^T\z}\e-(XZ)^{-1}\e\rt)^T (Z\dx+X\dz), \\
    g_2 &= (n+\nu)\frac{\dx^T\dz}{\x^T\z} +
    \frac{\norm{X^{-1}\dx}^2+\norm{Z^{-1}\dz}^2}{2(1-\tau)}.
  \end{align*}
\end{lemma}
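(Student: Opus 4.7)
The plan is to compare $\phi$ before and after the step term by term, and bound each piece by its first- and second-order Taylor-type expansion.

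First I would split the potential difference into the single $\ln(\x^T\z)$ term and the $2n$ individual $\ln(x_i)$ and $\ln(z_i)$ terms. For the former, expand
\[
(\x+\alpha\dx)^T(\z+\alpha\dz) = \x^T\z + \alpha(\z^T\dx+\x^T\dz) + \alpha^2\dx^T\dz,
\]
so that $\ln(\x^T\z)$ changes by $(n+\nu)\ln(1+s)$ with $s = \alpha\frac{\z^T\dx+\x^T\dz}{\x^T\z}+\alpha^2\frac{\dx^T\dz}{\x^T\z}$; applying $\ln(1+s)\le s$ produces a linear-in-$\alpha$ term $(n+\nu)\frac{\z^T\dx+\x^T\dz}{\x^T\z}$ and the quadratic contribution $(n+\nu)\dx^T\dz/(\x^T\z)$ that matches the first piece of $g_2$.

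For the componentwise log terms, I would write each $-\ln\!\bigl((x_i+\alpha\Delta x_i)/x_i\bigr) = -\ln(1+\alpha\Delta x_i/x_i)$ and similarly for $z_i$. The stepsize hypotheses guarantee $|\alpha\Delta x_i/x_i|\le\tau$ and $|\alpha\Delta z_i/z_i|\le\tau$, so the power-series estimate
\[
-\ln(1+t) \le -t + \frac{t^2}{2(1-\tau)} \quad\text{for } |t|\le\tau<1
\]
is applicable; summing it over $i$ yields the linear-in-$\alpha$ contribution $-\e^T X^{-1}\dx - \e^T Z^{-1}\dz$ and the quadratic contribution $\tfrac{\alpha^2}{2(1-\tau)}(\norm{X^{-1}\dx}^2+\norm{Z^{-1}\dz}^2)$, which is exactly the second piece of $g_2$.

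It then remains to collect all linear-in-$\alpha$ contributions and recognise the claimed factorisation. Rewriting $\e^T X^{-1}\dx = \sum_i \tfrac{z_i}{x_iz_i}\Delta x_i$ and $(n+\nu)\z^T\dx/(\x^T\z) = \sum_i \tfrac{(n+\nu)z_i}{\x^T\z}\Delta x_i$, one can pull out the common factor $z_i$ per coordinate, and symmetrically factor $x_i$ from the $\Delta z_i$ terms, giving
\[
\left(\frac{n+\nu}{\x^T\z}\e-(XZ)^{-1}\e\right)^T(Z\dx+X\dz),
\]
which is $g_1$. The main obstacle, though entirely mechanical, is making sure this algebraic regrouping of the first-order terms is carried out without sign or indexing errors; everything else reduces to applying the two elementary log inequalities noted above.
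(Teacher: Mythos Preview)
Your argument is correct and is precisely the standard derivation of this estimate: apply $\ln(1+s)\le s$ to the aggregate term $(n+\nu)\ln(\x^T\z)$ and the second-order logarithm bound $-\ln(1+t)\le -t+\tfrac{t^2}{2(1-\tau)}$ (valid for $|t|\le\tau<1$) to each of the $2n$ barrier terms, then regroup. The paper itself does not prove this lemma; it merely quotes it from Mizuno, Kojima and Todd \cite{mizuno1995}, whose proof follows exactly the route you outline.
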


\begin{lemma}
  \label{lem:wbound}
  For any $n$-vector $\w>0$ and $\nu\ge\sqrt{n}$
  \begin{equation*}
    \lt\lVert W^{-1}\e-\frac{n+\nu}{\w^T\w}\w\rt\rVert \ge
    \frac{\sqrt{3}}{2\wmin},
  \end{equation*}
  where $W:=\diag(\w)$ and $\wmin:=\min_i w_i$.
\end{lemma}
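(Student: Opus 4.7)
The natural starting point is the algebraic identity
\begin{equation*}
  \left\lVert W^{-1}\e - \frac{n+\nu}{\w^T\w}\w\right\rVert^2 = \|W^{-1}\e\|^2 + \frac{\nu^2 - n^2}{\|\w\|^2},
\end{equation*}
obtained by expanding the square and using $(W^{-1}\e)^T\w = n$. In the easy case $\nu \ge n$ this already settles the lemma, since the second term is nonnegative and $\|W^{-1}\e\|^2 \ge 1/\wmin^2 > 3/(4\wmin^2)$.

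The substance lies in the range $\sqrt{n}\le\nu<n$. The inequality is invariant under the scaling $\w\mapsto c\w$ (both sides are multiplied by $1/c$), so I would normalize $\wmin=1$ and, after a harmless relabeling, reduce to showing $\|W^{-1}\e-\lambda\w\|^2\ge 3/4$ with $\lambda=(n+\nu)/\|\w\|^2$, under the constraints $w_1=1$ and $w_i\ge 1$ for $i\ge 2$. The next step is a symmetrization: for any two indices $i,k\ge 2$, replacing $(w_i^2,w_k^2)$ by $(q/2,q/2)$ with $q=w_i^2+w_k^2$ preserves $\|\w\|^2$ (and hence $\lambda$) while decreasing $\|W^{-1}\e\|^2$, thanks to AM--HM applied to $1/w_i^2+1/w_k^2$. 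Iterating pairwise swaps, the minimum over the feasible set is attained when $w_2=\cdots=w_n=:t$ for some $t\ge 1$, and the problem reduces to verifying that the one-variable function
\begin{equation*}
  g(t) := 1 + \frac{n-1}{t^2} + \frac{\nu^2 - n^2}{1+(n-1)t^2}
\end{equation*}
satisfies $g(t)\ge 3/4$ on $[1,\infty)$.

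From here it is routine single-variable calculus. Solving $g'(t)=0$ yields either (i) no interior critical point, in which case $g(t)\ge\min(g(1),g(\infty))=\min(\nu^2/n,1)\ge 1$; or (ii) a unique critical point at $t_*^2=1/\bigl[\sqrt{n^2-\nu^2}-(n-1)\bigr]$, present precisely when $\nu^2<2n-1$. At that point a direct substitution collapses $g(t_*)$ to the clean form $g(t_*)=1-\bigl[\sqrt{n^2-\nu^2}-(n-1)\bigr]^2$. The finishing estimate uses only $\nu^2\ge n$:
\begin{equation*}
  \sqrt{n^2-\nu^2}\le\sqrt{n(n-1)}\le n-\tfrac{1}{2},
\end{equation*}
where the last step comes from $n(n-1)\le(n-\tfrac{1}{2})^2$. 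Hence $g(t_*)\ge 1-\tfrac{1}{4}=3/4$, which is exactly the claim in the normalization $\wmin=1$.

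The main obstacle I anticipate is the symmetrization step rather than the calculus: one must argue that iterated pairwise swaps monotonically decrease the objective down to the fully symmetric configuration, and that feasibility is preserved (the latter is automatic since $w_i,w_k\ge 1$ gives $q/2\ge 1$). Everything else is bookkeeping for the expansion identity and standard one-variable optimization; the delicate input is simply $\nu^2\ge n$, which gives exactly the slack $1/2$ needed to clear the $\sqrt{3}/2$ threshold.
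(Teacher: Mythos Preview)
The paper does not actually prove this lemma: it is quoted, together with Lemma~\ref{lem:qbound}, as a technical result from Mizuno, Kojima and Todd and used without argument. So there is no in-paper proof to compare your attempt against.

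Your argument is correct. The expansion identity
\[
  \Bigl\lVert W^{-1}\e-\tfrac{n+\nu}{\w^T\w}\w\Bigr\rVert^2
  = \lVert W^{-1}\e\rVert^2 + \frac{\nu^2-n^2}{\lVert\w\rVert^2}
\]
is right, the scale-invariance reduction to $\wmin=1$ is clean, and the one-variable endgame goes through exactly as you describe: at the interior critical point one gets $g(t_*)=1-s^2$ with $s=\sqrt{n^2-\nu^2}-(n-1)$, and the hypothesis $\nu^2\ge n$ yields $\sqrt{n^2-\nu^2}\le\sqrt{n(n-1)}\le n-\tfrac12$, hence $s\le\tfrac12$ and $g(t_*)\ge\tfrac34$. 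The boundary checks $g(1)=\nu^2/n\ge 1$ and $g(t)\to 1$ as $t\to\infty$ close the remaining cases.

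The only place I would tighten the write-up is the symmetrization step. ``Iterated pairwise swaps'' invites a convergence question you do not need to answer. It is cleaner to argue directly: with $w_1=1$ and $\lVert\w\rVert^2$ held fixed, the objective depends on $w_2,\dots,w_n$ only through $\sum_{i\ge 2}1/w_i^2$, and by the AM--HM inequality applied to $u_i:=w_i^2$ one has $\sum_{i\ge 2}1/u_i\ge (n-1)^2/\sum_{i\ge 2}u_i$ with equality exactly when all $u_i$ coincide. Since $\sum_{i\ge 2}u_i\ge n-1$, the equal configuration $u_i=\sum u_j/(n-1)\ge 1$ is feasible, so the infimum over the full feasible set is attained along the one-parameter family $w_2=\cdots=w_n=t\ge 1$. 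That replaces the iterative picture by a single inequality and removes the only soft spot you flagged.
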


Applying Lemma~\ref{lem:wbound} to the vector $\+r$ defined in
\eqref{eq:snewton} shows that
\begin{equation}
  \label{eq:rbound}
  \norm{\+r}=\norm{-\w+\mu W^{-1}\e}=\mu\norm{-\frac{1}{\mu}\w+W^{-1}\e}
  \ge \mu\frac{\sqrt{3}}{2\wmin}.
\end{equation}

The following lemma extends the analysis of the feasible potential reduction
method given in \cite{wright1997}. It shows that Algorithm~\ref{alg:feasible}
finds a step size that reduces $\phi$ by at least the prescribed value in each
iteration.

\begin{lemma}
  \label{lem:deltaphi1}
  In the $k$-th iteration of Algorithm~\ref{alg:feasible} \eqref{eq:stepsize1}
  holds for
  \begin{equation*}
    \alpha:=\frac{\wmin}{2\norm{\+r}}(1-\kappa)^3,
  \end{equation*}
  where $\wmin:=\min_i\sqrt{x_i^kz_i^k}$.
\end{lemma}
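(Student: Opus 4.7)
The plan is to apply Lemma~\ref{lem:qbound} with $\dx=D\du$ and $\dz=D^{-1}\dv$ and show that the resulting quadratic $g_1\alpha+g_2\alpha^2$ is bounded by $-\delta$ at the prescribed step. In the feasible setting $\+p=\+q=\0$, so the closed-form expressions from the proof of Lemma~\ref{lem:relerr} reduce to $\du^*=(I-P)\+r$, $\dv^*=P\+r$, and hence the inexact directions are $\du=(I-P)(\+r+\+\xi)$, $\dv=P(\+r+\+\xi)$. These are orthogonal, so crucially $\du^T\dv=0$, and $\|\du^*\|^2+\|\dv^*\|^2=\|\+r\|^2$.

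Next I would rewrite the coefficients in scaled variables. One checks $X^{-1}\dx=W^{-1}\du$, $Z^{-1}\dz=W^{-1}\dv$, $Z\dx+X\dz=W(\du+\dv)$, and from $\mu W^{-1}\e-\w=\+r$ one gets $g_1=-\+r^T(\+r+\+\xi)/\mu$; condition \eqref{eq:cond1} then yields
\[
  g_1 \le -\frac{(1-\kappa)\|\+r\|^2}{\mu}.
\]
Because $\du^T\dv=0$, the $(n+\nu)\dx^T\dz/\x^T\z$ term in $g_2$ vanishes. Lemma~\ref{lem:relerr}, via \eqref{eq:ubound}--\eqref{eq:vbound}, gives $\|\du\|\le\|\du^*\|/(1-\kappa)$ and $\|\dv\|\le\|\dv^*\|/(1-\kappa)$, so
\[
  \|W^{-1}\du\|^2+\|W^{-1}\dv\|^2 \le \frac{\|\du\|^2+\|\dv\|^2}{\wmin^2} \le \frac{\|\+r\|^2}{(1-\kappa)^2\wmin^2}.
\]
At the prescribed $\alpha$, both $\alpha\|W^{-1}\du\|_\infty$ and $\alpha\|W^{-1}\dv\|_\infty$ are dominated by $\tau:=(1-\kappa)^2/2<1$, so Lemma~\ref{lem:qbound} is applicable with $2(1-\tau)\ge 1$, giving $g_2\le\|\+r\|^2/((1-\kappa)^2\wmin^2)$.

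Substituting $\alpha=\wmin(1-\kappa)^3/(2\|\+r\|)$ and using the lower bound $\wmin\|\+r\|/\mu\ge\sqrt{3}/2$ from \eqref{eq:rbound}, I obtain
\[
  g_1\alpha + g_2\alpha^2 \le -\frac{(1-\kappa)^4\wmin\|\+r\|}{2\mu}+\frac{(1-\kappa)^4}{4} \le -\frac{\sqrt{3}-1}{4}(1-\kappa)^4,
\]
and since $(\sqrt{3}-1)/4>0.15$, the bound is at most $-\delta$. The main obstacle is not conceptual but bookkeeping: choosing $\tau$ small enough that $1/(2(1-\tau))$ absorbs cleanly into a constant while still dominating $\alpha\max(\|W^{-1}\du\|_\infty,\|W^{-1}\dv\|_\infty)$, and correctly identifying that the orthogonality of $\du$ and $\dv$ survives the inexactness (this is what makes the quadratic term small enough for the linear term to dominate at the chosen $\alpha$).
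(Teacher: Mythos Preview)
Your argument is correct and follows the same route as the paper's proof: establish $\du^T\dv=0$ from feasibility, bound $g_1$ via \eqref{eq:cond1} and $g_2$ via \eqref{eq:ubound}--\eqref{eq:vbound} together with $\|\du^*\|^2+\|\dv^*\|^2=\|\+r\|^2$, then substitute $\alpha$ and invoke \eqref{eq:rbound}. The only cosmetic difference is that the paper takes $\tau=1/2$ (so $1/(2(1-\tau))=1$ exactly), whereas you take the slightly sharper $\tau=(1-\kappa)^2/2$ and use $2(1-\tau)\ge1$; either way the final inequality $(\sqrt{3}-1)/4>0.15$ gives the claim.
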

\begin{proof}
  It follows from the first two block equations in \eqref{eq:isnewton} and
  $\+p=\0$, $\+q=\0$ that
  \begin{equation*}
    \du^T\dv = -\du^TDA^T\dy = -(AD\du)^T\dy = 0,
  \end{equation*}
  and analogously $(\du^*)^T\dv^*=0$ from \eqref{eq:snewton}. Therefore
  $\norm{\du^*}^2+\norm{\dv^*}^2=\norm{\+r}^2$ and from
  \eqref{eq:ubound}, \eqref{eq:vbound} and the definition of $\alpha$
  \begin{align*}
    \norm{\alpha X^{-1}\dx}_\infty &\le \alpha\norm{W^{-1}}\norm{\du}
    \le \frac{\alpha}{\wmin} \frac{\norm{\du^*}}{1-\kappa}
    \le \frac{\alpha}{\wmin} \frac{\norm{\+r}}{1-\kappa} \le \frac{1}{2}, \\
    \norm{\alpha Z^{-1}\dz}_\infty &\le \alpha\norm{W^{-1}}\norm{\dv}
    \le \frac{\alpha}{\wmin} \frac{\norm{\dv^*}}{1-\kappa}
    \le \frac{\alpha}{\wmin} \frac{\norm{\+r}}{1-\kappa} \le \frac{1}{2}.
  \end{align*}
  Therefore $\tau:=1/2$ satisfies the assumptions of Lemma~\ref{lem:qbound}, so
  that
  \begin{equation*}
    \phi(\xk+\alpha\dx,\zk+\alpha\dz) - \phi(\xk,\zk)
    \le g_1\alpha + g_2\alpha^2
  \end{equation*}
  with coefficients
  \begin{align*}
    g_1 &= \lt( \frac{n+\nu}{\w^T\w}\e-W^{-2}\e \rt)^T W(\du+\dv) \\
    g_2 &= \norm{W^{-1}\du}^2+\norm{W^{-1}\dv}^2.
  \end{align*}
  To show that $\phi$ is sufficiently reduced along the direction
  $\rvec{\dx,\dz}$ it is necessary to show that $g_1$ is negative and bounded
  away from zero, while $g_2$ is bounded. From the definition of $\+r$ and
  condition \eqref{eq:cond1} it follows that
  \begin{subequations}
  \begin{align}
    \label{eq:g1}
    g_1 &= \lt(\frac{n+\nu}{\w^T\w}\w-W^{-1}\e\rt)^T(\du+\dv)
    = -\frac{n+\nu}{\w^T\w}\+r^T(\+r+\+\xi) \\
    \label{eq:g1bound}
    &\le -(1-\kappa)\frac{n+\nu}{\w^T\w}\norm{\+r}^2.
  \end{align}
  \end{subequations}
  For the second order term it follows from \eqref{eq:ubound}, \eqref{eq:vbound}
  that
  \begin{align*}
    g_2 &= \norm{W^{-1}\du}^2+\norm{W^{-1}\dv}^2
    \le \frac{1}{\wmin^2} \lt(\norm{\du}^2+\norm{\dv}^2\rt) \\
    &\le \frac{\norm{\du^*}^2+\norm{\dv^*}^2}{\wmin^2(1-\kappa)^2}
    = \frac{\norm{\+r}^2}{\wmin^2(1-\kappa)^2}.
  \end{align*}
  Inserting the bounds on $g_1$ and $g_2$ into the quadratic form and using the
  definition of $\alpha$ gives
  \begin{align*}
    \phi(\xk&+\alpha\dx,\zk+\alpha\dz) - \phi(\xk,\zk) \\
    &\le -(1-\kappa)\frac{n+\nu}{\w^T\w}\norm{\+r}^2 \alpha
    + \frac{\norm{\+r}^2}{\wmin^2(1-\kappa)^2} \alpha^2 \\
    &= -(1-\kappa)^4\frac{n+\nu}{\+w^T\+w}\frac{\wmin}{2}\norm{\+r}
    + \frac{(1-\kappa)^4}{4}.
  \end{align*}
  Finally, using the bound on $\norm{\+r}$ from \eqref{eq:rbound} gives
  \begin{align*}
    \phi(\xk&+\alpha\dx,\zk+\alpha\dz) - \phi(\xk,\zk) \\
    &\le (1-\kappa)^4 \lt(-\frac{\sqrt{3}}{4}+\frac{1}{4}\rt) \\
    &\le -0.15(1-\kappa)^4 = -\delta
  \end{align*}
  as claimed.
\end{proof}

The proof of Theorem~\ref{thm:feasible} is immediate.
Since $\phi(\x,\z)\ge\nu\ln(\x^T\z)$, the termination condition
\begin{equation*}
  \nu \ln\lt((\xk)^T\zk\rt) \le \nu \ln(\eps)
\end{equation*}
is satisfied when
\begin{equation}
  \label{eq:phik}
  \phi(\xk,\zk) \le \phi(\x^0,\z^0)-k\delta \le \nu\ln(\eps).
\end{equation}
Since under the assumption of the theorem $\phi(\x^0,\z^0)=O(\nu L)$ and
$\ln(1/\eps)=O(L)$, and since $\delta$ is independent of $n$, \eqref{eq:phik}
holds for $k\ge K=O(\nu L)$.

\section{Proof of Theorem~\ref{thm:infeasible}}
\label{sec:proof2}

The proof of the theorem is based on Mizuno, Kojima and Todd \cite{mizuno1995}.
We define a sequence $\{\theta^k\}$ by
\begin{equation}
  \label{eq:theta}
  \theta^0:=1 \quad \text{and} \quad \theta^{k+1}:=(1-\alpha^k) \theta^k
  \text{ for } k\ge0.
\end{equation}
Since the first two block equations in \eqref{eq:nonlinear} are linear and
satisfied exactly by a full step of the algorithm
\begin{equation*}
  \rvec{A\xk-\+b,A^T\yk+\zk-\+c}=\theta^k \rvec{A\x^0-\+b,A^T\y^0+\z^0-\+c}.
\end{equation*}

The following lemma is obtained from Lemma~4 in \cite{mizuno1995} by setting
$\gamma_0=1$ and $\gamma_1=1$.

\begin{lemma}
  \label{lem:uvbound2}
  Let $\rho>0$ and suppose that
  \begin{align}
    \rvec{\x^0,\y^0,\z^0} &= \rho \rvec{\e,\0,\e}, \notag \\
    \rvec{A\xk-\+b,A^T\yk+\zk-\+c} &=
    \theta^k \rvec{A\x^0-\+b,A^T\y^0+\z^0-\+c}, \notag \\
    \label{eq:feas} (\xk)^T\zk &\ge \theta^k (\x^0)^T\z^0.
  \end{align}
  If there exist solutions $\x^*$ to \eqref{eq:primal} and $\rvec{\y^*,\z^*}$
  to \eqref{eq:dual} such that $\norm{\rvec{\x^*,\z^*}}_\infty\le\rho$ then
  the solution to \eqref{eq:snewton} at $\rvec{\xk,\yk,\zk}$ satisfies
  \begin{align*}
    \norm{\du^*} &\le \frac{5(\xk)^T\zk}{\wmin}, \\
    \norm{\dv^*} &\le \frac{5(\xk)^T\zk}{\wmin},
  \end{align*}
  where $\wmin:=\min_i\sqrt{x^k_iz^k_i}$.
\end{lemma}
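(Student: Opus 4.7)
The plan is to handle the infeasibility in the scaled Newton system by introducing an auxiliary direction that absorbs the residuals $\+p$ and $\+q$, thereby reducing the analysis to a feasible orthogonal decomposition. A natural choice is
\[
\bar{\+x} := \theta^k(\+x^*-\+x^0), \qquad
\bar{\+y} := \theta^k(\+y^*-\+y^0), \qquad
\bar{\+z} := \theta^k(\+z^*-\+z^0).
\]
Using primal feasibility of $\+x^*$, dual feasibility of $(\+y^*,\+z^*)$ and the assumed form of the infeasibility residuals, one checks that $A\bar{\+x}=\+p$ and $DA^T\bar{\+y}+D\bar{\+z}=\+q$. Consequently the shifted quantities $\tilde u := \du^* - D^{-1}\bar{\+x}$ and $\tilde v := \dv^* - D\bar{\+z}$ satisfy the \emph{homogeneous} versions of the first two block equations in \eqref{eq:snewton}, which forces $\tilde u^T\tilde v=0$. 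The third block equation then gives
\[
\|\tilde u\|^2 + \|\tilde v\|^2
= \|\tilde u+\tilde v\|^2
= \|\+r - D^{-1}\bar{\+x} - D\bar{\+z}\|^2,
\]
and the triangle inequality yields $\|\du^*\| \le \|\tilde u\| + \|D^{-1}\bar{\+x}\|$ together with a symmetric bound for $\|\dv^*\|$. The task therefore reduces to bounding $\|\+r\|$, $\|D^{-1}\bar{\+x}\|$ and $\|D\bar{\+z}\|$ by constant multiples of $(\xk)^T\zk/\wmin$.

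The terms involving $\+r = -\w + \mu W^{-1}\e$ are routine: $\|\w\|^2=(\xk)^T\zk$ combined with $\wmin\le\|\w\|$ gives $\|\w\|\le(\xk)^T\zk/\wmin$, while $\mu\|W^{-1}\e\|\le\mu\sqrt n/\wmin\le(\xk)^T\zk/\wmin$ using $\mu=(\xk)^T\zk/(n+\nu)$ and $\nu\ge\sqrt n$. For the auxiliary terms, the componentwise estimates $|x^*_i-\rho|\le\rho$ and $|z^*_i-\rho|\le\rho$ together with $w_i^2=x^k_iz^k_i$ yield $\|D^{-1}\bar{\+x}\|\le\theta^k\rho\|\+z^k\|_1/\wmin$ and $\|D\bar{\+z}\|\le\theta^k\rho\|\+x^k\|_1/\wmin$, so the remaining work is to prove a scalar inequality of the form $\theta^k\rho(\|\+x^k\|_1+\|\+z^k\|_1) \le C(\xk)^T\zk$ for a modest constant $C$.

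I expect this last estimate to be the main obstacle, and it is where the hypotheses of the lemma are really used. The idea is that $\+x^k-(1-\theta^k)\+x^*-\theta^k\+x^0$ lies in the null space of $A$ whereas $\+z^k-(1-\theta^k)\+z^*-\theta^k\+z^0$ lies in the row space of $A$, so their inner product vanishes. Expanding this inner product and applying $(\+x^*)^T\+z^*=0$ at an optimum, $\+x^0=\+z^0=\rho\e$, the nonnegativity of the cross terms $(\+x^k)^T\+z^*$ and $(\+x^*)^T\+z^k$, the bound $\|\+x^*\|_\infty,\|\+z^*\|_\infty\le\rho$, and finally the hypothesis \eqref{eq:feas} in the form $(\xk)^T\zk\ge\theta^k n\rho^2$, one arrives at $\theta^k\rho(\|\+x^k\|_1+\|\+z^k\|_1)\le 3(\xk)^T\zk$. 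Combining all the pieces and tracking constants carefully then delivers the claimed bound.
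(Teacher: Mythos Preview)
Your outline is precisely the Mizuno--Kojima--Todd argument that the paper invokes (the paper gives no proof of its own, merely citing Lemma~4 of \cite{mizuno1995} with $\gamma_0=\gamma_1=1$): shift by $\theta^k\bigl((\x^*,\y^*,\z^*)-(\x^0,\y^0,\z^0)\bigr)$ to kill the feasibility residuals, use the null-space/row-space orthogonality of the shifted directions, and control $\theta^k\rho(\|\xk\|_1+\|\zk\|_1)$ via the vanishing inner product of $\xk-(1-\theta^k)\x^*-\theta^k\x^0$ and $\zk-(1-\theta^k)\z^*-\theta^k\z^0$.  All of this is correct, and your scalar bound $3(\xk)^T\zk$ is exactly what one obtains.

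One small caveat on the final ``tracking constants'' step: the combination you wrote, $\|\du^*\|\le\|\tilde u\|+\|D^{-1}\bar{\x}\|$ together with $\|\tilde u\|\le\|\+r\|+\|D^{-1}\bar{\x}\|+\|D\bar{\z}\|$, double-counts $\|D^{-1}\bar{\x}\|$ and delivers the constant $8$ rather than $5$.  To hit $5$ you should instead use that $\tilde u=(I-P)(\tilde u+\tilde v)$ with $P=DA^T(AD^2A^T)^{-1}AD$, which after regrouping gives
\[
\du^*=(I-P)\+r+P\,D^{-1}\bar{\x}-(I-P)\,D\bar{\z},
\]
so that $\|\du^*\|\le\|\+r\|+\|D^{-1}\bar{\x}\|+\|D\bar{\z}\|\le 2(\xk)^T\zk/\wmin+3(\xk)^T\zk/\wmin$ directly, and likewise for $\dv^*$.
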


The following lemma is based on Lemma~5 in \cite{mizuno1995}. It shows that when
optimal solutions to \eqref{eq:primal} and \eqref{eq:dual} exist, then
Algorithm~\ref{alg:infeasible} can find a step size in each iteration that
satisfies \eqref{eq:step2a} and \eqref{eq:step2b}.

\begin{lemma}
  \label{lem:minstep2}
  If there exist optimal solutions $\x^*$ to
  \eqref{eq:primal} and $\rvec{\y^*,\z^*}$ to \eqref{eq:dual} such that
  $\norm{\rvec{\x^*,\z^*}}_\infty\le\rho$ then \eqref{eq:step2a} and
  \eqref{eq:step2b} hold for
  \begin{equation*}
    \alpha := \frac{(1-\kappa)^3\wmin^2}{200(n+\nu)(\xk)^T\zk}
  \end{equation*}
  in the $k$-th iteration, where $\wmin:=\min_i\sqrt{x^k_iz^k_i}$.
\end{lemma}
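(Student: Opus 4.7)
The plan is to adapt the argument of Lemma~\ref{lem:deltaphi1}, with two modifications that account for the infeasibility of Algorithm~\ref{alg:infeasible}: the residuals $\+p,\+q$ in \eqref{eq:snewton} are nonzero, so the identity $\du^T\dv=0$ fails and $\norm{\du^*},\norm{\dv^*}$ can no longer be bounded by $\norm{\+r}$ alone. For the norm bound, I would invoke Lemma~\ref{lem:uvbound2}, whose hypothesis \eqref{eq:feas} is ensured because step~\eqref{eq:step2b} has been enforced at every preceding iteration, giving $(\xk)^T\zk\ge\theta^k(\x^0)^T\z^0$. Combined with \eqref{eq:ubound}--\eqref{eq:vbound} from Lemma~\ref{lem:relerr}, this yields $\norm{\du},\norm{\dv}\le 5(\xk)^T\zk/((1-\kappa)\wmin)$. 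A direct computation then shows that the proposed $\alpha$ satisfies $\alpha\norm{W^{-1}\du}_\infty,\alpha\norm{W^{-1}\dv}_\infty\le(1-\kappa)^2/(40(n+\nu))\le 1/2$, so Lemma~\ref{lem:qbound} applies with $\tau=1/2$.

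Applying Lemma~\ref{lem:qbound}, the leading coefficient $g_1$ has the same expression as in Lemma~\ref{lem:deltaphi1}, so condition \eqref{eq:cond1}, the identity $\w^T\w=(n+\nu)\mu$, and \eqref{eq:rbound} yield $g_1\le -3(1-\kappa)\mu/(4\wmin^2)$. The quadratic coefficient $g_2$ now contains the cross term $(n+\nu)\dx^T\dz/(\x^T\z)=\du^T\dv/\mu$, which must be bounded via $|\du^T\dv|\le\norm{\du}\norm{\dv}$ and the norm bound above; together with the standard estimate on $\norm{W^{-1}\du}^2+\norm{W^{-1}\dv}^2$ this produces two contributions, of order $(n+\nu)^2\mu/((1-\kappa)^2\wmin^2)$ and $((\xk)^T\zk)^2/((1-\kappa)^2\wmin^4)$. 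Substituting the proposed $\alpha$ and using the universal bound $\wmin^2\le(\xk)^T\zk/n$, the three resulting terms in $g_1\alpha+g_2\alpha^2$ reduce to multiples of $(1-\kappa)^4/(n+\nu)^2$ or $(1-\kappa)^4/(n(n+\nu))$. The bound $\nu\le 2n$ imposed in Algorithm~\ref{alg:infeasible} is what guarantees $1/(n(n+\nu))\le 3/(n+\nu)^2$, and this is the key inequality that closes the arithmetic and gives $g_1\alpha+g_2\alpha^2\le-(1-\kappa)^4/(1600(n+\nu)^2)=-\delta$, establishing \eqref{eq:step2a}.

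Condition \eqref{eq:step2b} I would treat separately by the direct expansion
\[(\xk+\alpha\dx)^T(\zk+\alpha\dz)-(1-\alpha)(\xk)^T\zk=\alpha(n\mu+\w^T\+\xi)+\alpha^2\du^T\dv,\]
which uses $\e^T(Z^k\dx+X^k\dz)=\w^T(\du+\dv)$ and the third block of \eqref{eq:isnewton}. Condition \eqref{eq:cond3}, combined with $\w^T\w=(n+\nu)\mu$, gives $n\mu+\w^T\+\xi\ge(1-\kappa)n\mu$, while the proposed $\alpha$ is small enough that $\alpha|\du^T\dv|\le(1-\kappa)\mu/8$ cannot overcome this positive linear term for $n\ge 1$, so the right-hand side is nonnegative. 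The main obstacle is the tight calibration of the $g_2$ bound: the infeasibility cross term introduces a new contribution of a different order from the one in Lemma~\ref{lem:deltaphi1}, and making $g_1\alpha+g_2\alpha^2\le-\delta$ work out requires the combination of the large denominator $200(n+\nu)(\xk)^T\zk$ in $\alpha$, the elementary bound $\wmin^2\le(\xk)^T\zk/n$, and the restriction $\nu\le 2n$ to all fit together.
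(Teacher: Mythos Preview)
Your proposal is correct and follows essentially the same route as the paper's proof: invoke Lemma~\ref{lem:uvbound2} together with \eqref{eq:ubound}--\eqref{eq:vbound} to bound $\norm{\du},\norm{\dv}$, bound $g_1$ via \eqref{eq:cond1} and \eqref{eq:rbound}, bound $g_2$ via $|\du^T\dv|\le\norm{\du}\norm{\dv}$ and the inequality $\wmin^2\le\w^T\w/n$ (with $\nu\le 2n$ to reconcile the $1/(n(n+\nu))$ and $1/(n+\nu)^2$ scales), and verify \eqref{eq:step2b} by direct expansion using \eqref{eq:cond3}. The only cosmetic differences are that the paper takes $\tau=1/40$ rather than your $\tau=1/2$ and applies the $\wmin$ inequality to homogenize the two $g_2$ contributions before substituting $\alpha$ rather than after, but the arithmetic closes to exactly $-\delta$ in either arrangement.
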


\begin{proof}
  A simple calculation shows that by definition of $\rvec{\x^0,\z^0}$ and
  because of \eqref{eq:step2b} the assumptions of Lemma~\ref{lem:uvbound2} are
  satisfied. Combining the lemma with \eqref{eq:ubound}, \eqref{eq:vbound} shows
  that
  \begin{align*}
    \norm{\du} &\le \frac{5(\xk)^T\zk}{(1-\kappa)\wmin}, \\
    \norm{\dv} &\le \frac{5(\xk)^T\zk}{(1-\kappa)\wmin}.
  \end{align*}
  It follows that
  \begin{align*}
    \norm{\alpha X^{-1}\dx} &\le \alpha \norm{W^{-1}} \norm{\du}
    \le \alpha \frac{5(\xk)^T\zk}{(1-\kappa)\wmin^2}
    = \frac{(1-\kappa)^2}{40(n+\nu)} \le \frac{1}{40}, \\
    \norm{\alpha Z^{-1}\dz} &\le \alpha \norm{W^{-1}} \norm{\dv}
    \le \alpha \frac{5(\xk)^T\zk}{(1-\kappa)\wmin^2}
    = \frac{(1-\kappa)^2}{40(n+\nu)} \le \frac{1}{40}.
  \end{align*}
  Therefore $\tau:=1/40$ satisfies the assumption of Lemma~\ref{lem:qbound},
  so that
  \begin{equation*}
    \phi(\xk+\alpha\dx,\zk+\alpha\dz) \le \phi(\xk,\zk) + g_1\alpha +
    g_2\alpha^2
  \end{equation*}
  with coefficients
  \begin{align*}
    g_1 &= \lt(\frac{n+\nu}{\w^T\w}\e-W^{-2}\e\rt)^T W(\du+\dv), \\
    g_2 &= \lt((n+\nu)\frac{\du^T\dv}{\w^T\w} +
    \frac{\norm{W^{-1}\du}^2+\norm{W^{-1}\dv}^2}{2(1-\tau)}\rt).
  \end{align*}

  It will be shown that $g_1$ is negative and bounded away from zero, while
  $g_2$ is bounded. Combining \eqref{eq:g1bound} and \eqref{eq:rbound} gives
  \begin{equation*}
    g_1 \le -(1-\kappa)\frac{1}{\mu}\norm{\+r}^2 \le
    -(1-\kappa)\mu\frac{3}{4\wmin^2}.
  \end{equation*}
  Next, from the bound on $\norm{\du}$ and $\norm{\dv}$ it follows that
  \begin{equation}
    \label{eq:dudv}
    |\du^T\dv| \le \norm{\du}\norm{\dv}
    \le \lt( \frac{5\w^T\w}{(1-\kappa)\wmin} \rt)^2,
  \end{equation}
  which implies that
  \begin{equation}
    \label{eq:g2a}
    (n+\nu)\frac{\du^T\dv}{\w^T\w} \le
    \frac{n+\nu}{\w^T\w}\lt( \frac{5\w^T\w}{(1-\kappa)\wmin} \rt)^2
    \le \frac{n+\nu}{n} \lt( \frac{5\w^T\w}{(1-\kappa)\wmin^2} \rt)^2,
  \end{equation}
  where the last inequality is obtained by multiplying with
  $\w^T\w/(n\wmin^2)\ge1$. Moreover, the bound on $\norm{\du}$ and
  $\norm{\dv}$ also implies that
  \begin{equation}
    \label{eq:g2b}
    \frac{\norm{W^{-1}\du}^2+\norm{W^{-1}\dv}^2}{2(1-\tau)}
    \le \frac{1}{1-\tau} \lt( \frac{5\w^T\w}{(1-\kappa)\wmin^2} \rt)^2.
  \end{equation}
  Adding up \eqref{eq:g2a} and \eqref{eq:g2b} and using $\nu\le2n$ gives
  \begin{equation*}
    g_2 \le \lt(\frac{n+\nu}{n}+\frac{1}{1-\tau}\rt) \lt(
    \frac{5\w^T\w}{(1-\kappa)\wmin^2} \rt)^2 \le
    5 \lt( \frac{5\w^T\w}{(1-\kappa)\wmin^2} \rt)^2.
  \end{equation*}
  Inserting $g_1$, $g_2$ and the definition of $\alpha$ into the quadratic form
  gives
  \begin{align*}
    \phi(\xk&+\alpha\dx,\zk+\alpha\dz) - \phi(\xk,\zk) \\
    &\le -(1-\kappa)\frac{\w^T\w}{n+\nu}\frac{3}{4\wmin^2} \alpha
    + 5 \lt( \frac{5\w^T\w}{(1-\kappa)\wmin^2} \rt)^2 \alpha^2 \\
    &= \frac{(1-\kappa)^4}{(n+\nu)^2}
    \lt(-\frac{3}{4\cdot200} + 5 \lt(\frac{5}{200}\rt)^2 \rt) = -\delta,
  \end{align*}
  which shows that $\alpha$ satisfies \eqref{eq:step2a}.

  Finally, to verify that $\alpha$ satisfies \eqref{eq:step2b}, a
  straightforward calculation shows that
  \begin{equation*}
    \dz^T\xk + \dx^T\zk = \dv^T\w + \du^T\w = \w^T(\+r+\+\xi)
    = \lt(\frac{n}{n+\nu}-1\rt)\w^T\w + \w^T\+\xi
  \end{equation*}
  and consequently
  \begin{multline*}
    (\xk+\alpha\dx)^T(\zk+\alpha\dz) = (\xk)^T\zk + \alpha(\dz^T\xk+\dx^T\zk) +
    \alpha^2 \dx^T\dz \\ = (1-\alpha)\w^T\w +
    \alpha\lt(\frac{n}{n+\nu}\w^T\w + \w^T\+\xi + \alpha\du^T\dv\rt).
  \end{multline*}
  Using \eqref{eq:dudv} and \eqref{eq:cond3} it follows for the term in
  parenthesis that
  \begin{align*}
    \frac{n}{n+\nu}\w^T\w + \w^T\+\xi + \alpha\du^T\dv &\ge
    \frac{(1-\kappa)n}{n+\nu}\w^T\w - \alpha\lt(\frac{5\w^T\w}
         {(1-\kappa)\wmin}\rt)^2 \\
    &= \frac{(1-\kappa)\w^T\w}{n+\nu}\lt(n-\frac{1}{8}\rt) > 0.
  \end{align*}
  Therefore $\alpha$ satisfies \eqref{eq:step2b}, which completes the proof.
\end{proof}

Theorem~\ref{thm:infeasible} follows from the lemma by the same argumentation as
in \cite{mizuno1995}. Under the hypothesis of the theorem
$\phi(\x^0,\z^0)=O(\nu L)$ and $\ln(1/\eps)=O(L)$. Since
$\phi(\x,\z)\ge\nu\ln(\x^T\z)$ and the potential function decreases by at
least $\delta$ in each iteration, Algorithm~\ref{alg:infeasible} terminates in
$O(\nu L/\delta)=O(\nu(n+\nu)^2L)$ iterations. When the algorithm stops in step
1, then $(\xk)^T\zk\le\eps$ and because of \eqref{eq:step2b}
\begin{equation*}
  \norm{\rvec{A\xk-\+b,A^T\yk+\zk-\+c}} \le \eps
  \norm{\rvec{A\x^0-\+b,A^T\y^0+\z^0-\+c}}/(\x^0)^T\z^0,
\end{equation*}
so that the final iterate is indeed an $\eps$-approximate solution. On the other
hand, if there exist optimal solutions $\x^*$ to \eqref{eq:primal} and
$\rvec{\y^*,\z^*}$ to \eqref{eq:dual} such that
$\norm{\rvec{\x^*,\z^*}}\le\rho$, then it follows from Lemma \ref{lem:minstep2}
that a step size exists which satisfies \eqref{eq:step2a} and \eqref{eq:step2b}.
Therefore, if the algorithm stops in step 3, then there are no such solutions.

\begin{remark}
Theorem~\ref{thm:infeasible} imposed the upper bound $\nu\le2n$, which
is not needed in the analysis of the exact potential reduction method. The
actual value of this bound, however, is not important and the proof remains
valid by adapting $\alpha$ and $\delta$ as long as $\nu=O(n)$.
\end{remark}

\section{Discussion}
\label{sec:discussion}

The analysis has shown some insights into the conditions
\eqref{eq:cond1}--\eqref{eq:cond3}. It has been seen from \eqref{eq:g1} that
$-\+r^T\+\xi<\norm{\+r}^2$ is sufficient and necessary for $\rvec{\dx,\dz}$ to
be a descent direction for $\phi$, making \eqref{eq:cond1} a necessary condition
in a potential reduction method. Condition \eqref{eq:cond2} bounds the curvature
of $\phi$ along $\rvec{\dx,\dz}$. When the iterate is feasible this condition
can be replaced by $\norm{\+r}\le c\norm{\+\xi}$ for an arbitrary constant $c$,
since then
\begin{equation*}
  \norm{\du}^2+\norm{\dv}^2=\norm{\+r+\+\xi}^2 \le (1+c)^2\norm{\+r}^2
\end{equation*}
gives the required bound on $g_2$ in Lemma~\ref{lem:deltaphi1}. For an
infeasible iterate, however, condition \eqref{eq:cond2} is needed in its form to
bound $\norm{\du}$ and $\norm{\dv}$. Finally, condition \eqref{eq:cond3}
guarantees that in the infeasible algorithm the step size restriction
\eqref{eq:step2b} can be satisfied.

Inexact directions of the form \eqref{eq:inewton} have been used and analysed in
\cite{monteiro2003,gondzio2009} in the path-following method, which sets
$\mu=\sigma\x^T\z/n$ for $\sigma<1$ and chooses the step size to keep
$x_iz_i\ge\gamma\x^T\z/n$ for a constant $\gamma\in(0,1)$. Both papers use a
basic-nonbasic splitting of the variables and solve \eqref{eq:isnewton} with
residual $\+\xi=\rvec{\+\xi_B,\+\xi_N}=\rvec{\+\xi_B,\0}$. \cite{monteiro2003}
imposes the condition
\begin{equation}
  \label{eq:monteiro}
  \norm{\+\xi_B} \le \frac{(1-\gamma)\sigma}{4\sqrt{n}}\sqrt{\x^T\z/n},
\end{equation}
whereas
\begin{equation}
  \label{eq:gondzio}
  \norm{W_B\+\xi_B}_\infty \le \eta \x^T\z/n
\end{equation}
is used in \cite{gondzio2009} with $\eta<1$ depending on $\sigma$ and $\gamma$.
Both conditions seem to require more effort by an iterative method than the
conditions used in this paper. \eqref{eq:monteiro} obviously becomes restrictive
for large problems. \eqref{eq:gondzio} is not affected by the problem dimension,
but the infinity norm does not tolerate outliers in $W_B\+\xi_B$.

Another form of inexact direction has been analysed in \cite{mizuno1999}, which
solves the complementarity equations exactly and allows a residual in the primal
and dual equations. Due to the form of the Newton system, solving the
complementarity equations exactly is trivial, whereas computing directions that
satisfy primal feasibility requires particular preconditioning techniques
\cite{gondzio2008,monteiro2003,oliveira2005}. The analysis in \cite{mizuno1999}
shows, however, that a residual in the feasibility equations must be measured in
a norm depending on $A$, which seems not to be accessible in an implementation.
Therefore this form of inexact direction is hardly useful in practice.

\bibliography{literature}{}
\bibliographystyle{plain}
\end{document}